\documentclass[12pt,twoside]{amsart}

\usepackage{geometry}
\usepackage{amssymb, amsmath, amsthm, amscd}
\usepackage{enumerate}
\usepackage{mathrsfs}
\usepackage{bm}

\usepackage{graphicx}
\usepackage{hhline}
\usepackage[all]{xy}

\usepackage{xcolor}

\usepackage{url}

\usepackage{iftex}

\ifPDFTeX

  \usepackage[pdfencoding=auto, unicode]{hyperref}
\else
  \usepackage[dvipdfmx, unicode]{hyperref}
\fi

\hypersetup{
    colorlinks=false,
    linkcolor=blue,
    citecolor=blue,
    urlcolor=blue,
    bookmarksnumbered=true,
    pdfstartview=FitH
}

\title[A characterization of projective space]
{A characterization of projective space via lengths of extremal rays} 

\subjclass[2020]{Primary 14E30; Secondary 14J17}
\keywords{lengths of extremal rays, cone theorem, Mori theory, 
projective space}

\author{Osamu Fujino}
\address{Department of 
Mathematics, Graduate School of Science, 
Kyoto University, Kyoto 606-8502, Japan}
\email{fujino@math.kyoto-u.ac.jp}

\author{Eric Jovinelly}
\address{Department of Mathematics \\
Brown University \\
Box 1917 \\
151 Thayer Street \\
Providence, RI, 02912}
\email{eric\_jovinelly@brown.edu}

\author{Brian Lehmann}
\address{Department of Mathematics \\
Boston College  \\
Chestnut Hill, MA \, \, 02467}
\email{lehmannb@bc.edu}

\author{Eric Riedl}
\address{Department of Mathematics \\
University of Notre Dame  \\
255 Hurley Hall \\
Notre Dame, IN 46556}
\email{eriedl@nd.edu}

\DeclareMathOperator{\NE}{\overline{NE}}
\DeclareMathOperator{\Nlc}{Nlc}
\DeclareMathOperator{\Nklt}{Nklt}

\newtheorem{thm}{Theorem}[section]
\newtheorem{lem}[thm]{Lemma}
\newtheorem{cor}[thm]{Corollary}

\theoremstyle{definition}
\newtheorem{defn}[thm]{Definition}
\newtheorem{rem}[thm]{Remark}

\newtheorem*{ack}{Acknowledgments}

\makeatletter

\@addtoreset{equation}{section}
\makeatother

\begin{document}

\begin{abstract}
We prove a new characterization of complex projective space using lengths of extremal rays.
\end{abstract}

\maketitle 


\section{Introduction}\label{a-sec1}

We work over the complex 
number field $\mathbb{C}$.  In this short note, our goal is to give a new characterization of projective space which relies on the anticanonical degrees of curves.  The first statements in this direction are due to the influential paper \cite{Mori79}.  \cite{CMSB} and \cite{Kebekus02} extended Mori's argument to show that $\mathbb{P}^{n}$ is the only smooth projective variety of dimension $n$ such that every curve has anticanonical degree $\geq (n+1)$.  \cite{CT07} further improved this characterization of $\mathbb{P}^{n}$ by allowing isolated LCIQ singularities and proving that one only needs to find a single extremal ray of length $\geq n+1$. \cite{Chen17} addressed varieties with quotient singularities.

\medskip

Our main result extends previous work on characterizations of $\mathbb{P}^{n}$ in several directions: it covers all MMP-type singularities, weakens the assumption on the anticanonical degree to the lowest possible value, and only requires knowledge of a single ray of the Mori cone.  Thus it also contributes to the study of varieties with ``long'' extremal rays, which has been carried out in the smooth setting by \cite{AO02, HN13, DH17}. 

\begin{thm}\label{thm:maintheorem}
Let $X$ be an $n$-dimensional projective variety and let 
$\Delta$ be an effective $\mathbb{R}$-divisor on $X$ such that 
$K_X+\Delta$ is $\mathbb{R}$-Cartier. 
Assume that there exists a $(K_X+\Delta)$-negative extremal ray
\[
R \subset \NE(X)
\]
which is rational, relatively ample at infinity, and satisfies
\[
l(R):=\inf\bigl\{-(K_X+\Delta)\cdot C \, | \, C \textrm{ is rational and }[C] \in R\bigr\} > n.
\]
Then $X\simeq \mathbb P^n$, $(X,\Delta)$ is terminal, and $\Delta$ is a divisor of degree less than $1$.
\end{thm}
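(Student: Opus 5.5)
We will use Fujino's cone and contraction theorem for quasi-log schemes, more precisely its version for extremal rays that are rational and relatively ample at infinity, i.e.\ ample on the non-lc locus $\Nlc(X,\Delta)$. This yields a contraction $\varphi_R\colon X\to W$ of $R$. The key input is the length estimate for rational curves: every positive-dimensional fiber component of $\varphi_R$ that is not contained in $\Nlc(X,\Delta)$ is covered by rational curves $C$ with $[C]\in R$ and
\[
-(K_X+\Delta)\cdot C\le 2\dim F .
\]
A sharper form, in the spirit of Kawamata's bound and the Ionescu--Wiśniewski inequality, bounds the degree by $\dim F+1$ when one uses a covering family of minimal rational curves through a general point. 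Because $R$ is ample at infinity, no curve of $R$ lies in $\Nlc(X,\Delta)$. So the hypothesis $l(R)>n$ forces the exceptional locus $E$ and a fiber $F$ to satisfy $\dim F+1>n$, i.e.\ $\dim F=n$. Hence $\varphi_R$ contracts $X$ to a point. Then $\rho(X)=1$, $-(K_X+\Delta)$ is ample, and $X$ is covered by rational curves $C$ of $(-(K_X+\Delta))$-degree $>n$.

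Next we pass from the pair to the variety. We first show that $(X,\Delta)$ is klt, and in fact lc with empty $\Nlc$. Suppose $W'=\Nklt(X,\Delta)$ or $\Nlc(X,\Delta)$ were nonempty. We apply the adjunction and vanishing framework for quasi-log structures: through any point of a minimal lc center there is a rational curve $C$ with $-(K_X+\Delta)\cdot C\le 2\dim$ (that center's bound). Taking a minimal rational curve meeting the center, the bend-and-break degree bound becomes at most $n$, since the curve family is forced through a positive-codimension locus. This contradicts $l(R)>n$. So $(X,\Delta)$ is klt and $X$ has rational singularities.

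Now take a minimal dominating family of rational curves on the normalization, or on a resolution, with a general member $C$. By the klt bend-and-break (Kollár--Miyaoka--Mori, Kebekus), curves through a general point $x$ satisfy
\[
-K_X\cdot C\ge -(K_X+\Delta)\cdot C>n,
\]
and $-K_X\cdot C\le n+1$ by Mori's bend-and-break applied through two points. Hence $-K_X\cdot C=n+1$, and $\Delta\cdot C<1$. Since the degree is $n+1$, all curves of the family through $x$ cover $X$, as the family has dimension $n-1$ at $x$. The varieties of minimal rational tangents at $x$ is then all of $\mathbb{P}(T_xX)$. We conclude $X\simeq\mathbb P^n$ by the Cho--Miyaoka--Shepherd-Barron/Kebekus characterization. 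The singular version needs care: we can work on the smooth locus, since general minimal curves avoid codimension-$2$ singularities, or we can invoke the characterization via a covering family with unsplit curves through a point and the resulting birational map from $\mathbb{P}^n$ being finite and hence an isomorphism by $\rho=1$ and normality. Finally, on $\mathbb P^n$ lines have $-K\cdot C=n+1$, so $\deg\Delta=\Delta\cdot\ell<1$. Every coefficient of $\Delta$ is then $<1$, and for a divisor of degree $<1$ on smooth $\mathbb P^n$ the multiplicity of $\Delta$ at every point is $<1$, giving terminality.

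The main obstacle is the second paragraph: excluding non-klt or non-lc loci with the sharp bound $n$ rather than $2n$. We expect the proof to rely on a precise lemma bounding the lengths of rational curves meeting lc centers by $\dim+1$ or less, plus the fiber bound $\le \dim F+1$ for klt pairs. We would try to reduce to a dlt blow-up and run adjunction to the minimal center, where the induced quasi-log structure on a center of dimension $<n$ gives curves of degree $\le 2\dim<\dots$. This last step needs refinement to $\dim+1$, which is where the real work lies.
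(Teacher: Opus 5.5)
Your first two steps have the same skeleton as the paper's: contract $R$, show the target is a point, then exclude non-klt points by adjunction to lc centers. The two inputs that make them work are asserted rather than supplied.

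The bound $0<-(K_X+\Delta)\cdot\ell\le\dim E+1$ for rational curves covering a fiber component $E$ (or an lc center) does not come from an Ionescu--Wi\'sniewski-type count with minimal rational curves. That count needs deformation theory of free curves on a smooth ambient space, whereas here $E$ may be non-normal and $X$ singular. The paper obtains the bound by rerunning Fujino's proofs in \cite{fujino6} with the improved Bend-and-Break of \cite{JLR25a}, which gives $\dim\overline E+1$ where Miyaoka--Mori gives $2\dim\overline E$ (Theorem~\ref{a-thm1.2}, Corollary~\ref{a-cor1.4}).

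With that bound, a positive-dimensional lc center $V$ is excluded at once. Indeed, $[V,(K_X+\Delta)|_V]$ is a quasi-log scheme with $-\omega$ ample, so $V$ contains a rational curve of degree $\le\dim V+1\le n$. Your idea of curves ``forced through a positive-codimension locus'' is neither needed nor a valid argument.

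What this does not cover is the case where $\Nklt(X,\Delta)$ is a nonempty finite set; recall $\dim\Nlc(X,\Delta)\le 0$ because $R$ is ample at infinity. Then there are no curves inside the centers at all. The paper needs a separate result, \cite[Theorem~1.17]{fujino6} (see also \cite[Theorem~1.7]{fujino-hashizume}), which produces a rational curve with $0<-(K_X+\Delta)\cdot\ell\le 1$. Your sketch has nothing for this case.

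The decisive gap is the final step, and it has two parts.

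First, klt together with $\rho(X)=1$ does not make $K_X$ $\mathbb{Q}$-Cartier, so $-K_X\cdot C$ is not yet defined, and you never obtain terminality. The paper takes a $\mathbb{Q}$-factorial terminalization $f\colon\widetilde X\to X$ from \cite{BCHM10}. By crepancy, a ray $\widetilde R$ with $f_*\widetilde R=R$ still has length $>n$, so $\rho(\widetilde X)=1$ and $X=\widetilde X$ is a $\mathbb{Q}$-factorial terminal Fano.

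Second, your minimal-rational-curve argument, as you derive it, only works for free curves in the smooth (or lci) locus. This applies to each ingredient:
\begin{itemize}
\item the dimension $-K_X\cdot C-2$ of the family through $x$;
\item the bound $-K_X\cdot C\le n+1$ from Bend-and-Break through two points;
\item the integrality that turns $n<-K_X\cdot C\le n+1$ into equality;
\item $\dim H_x=n-1$.
\end{itemize}
Minimal curves on singular Fanos need not avoid the singularities. On the terminal threefold $\mathbb{P}(1,1,1,2)$, every minimal rational curve through a general point is a ruling through the singular point, of anticanonical degree $5/2$. So your claim that general minimal curves avoid codimension-$2$ singularities is false, and you cannot simply assume a terminal Fano carries free rational curves in its smooth locus.

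Overcoming exactly this is the content of Theorem~\ref{thm:vfcurveexists}:
\begin{itemize}
\item start from very free curves of higher genus in the smooth locus, which exist on terminal Fanos by \cite{JLR25b};
\item impose incidence conditions via Lemma~\ref{lem-manyGeneralPoints} and degenerate by Bend-and-Break on a resolution;
\item use the hypothesis that all rational curves have degree $>n$ to show, by degree bookkeeping, that the rational tail at $q_0$ is an irreducible very free rational curve of degree exactly $n+1$ missing the exceptional locus.
\end{itemize}
Only then is the family doubly dominant and unsplit, so that \cite[Theorem~4.2]{CMSB} applies. Without an input of this kind your last step does not go through.
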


This result was known previously for toric pairs $(X, \Delta)$ by \cite{fujino1} (and generalized to toric foliations in
\cite{FS24} and \cite{FS25}).  The consequences of Theorem \ref{thm:maintheorem} for log canonical pairs are particularly striking.  In this setting, the quantity $l(R)$ of Theorem \ref{thm:maintheorem} is known as the length of the extremal ray $R$ (see Remark \ref{b-rem3.3}).

\begin{cor}\label{cor:maincor1}
Let $(X, \Delta)$ be an $n$-dimensional 
projective log canonical pair such that $\NE(X)$ admits a $(K_{X}+\Delta)$-negative extremal ray of length $>n$.  Then $X\simeq \mathbb P^n$, $(X,\Delta)$ is terminal, and $\Delta$ is a divisor of degree less than $1$.
\end{cor}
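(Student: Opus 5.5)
The corollary will follow from Theorem \ref{thm:maintheorem}; the work is checking its hypotheses in the log canonical setting. Let $(X,\Delta)$ be an $n$-dimensional projective log canonical pair and $R$ a $(K_X+\Delta)$-negative extremal ray of length $>n$. Log canonicity already includes that $\Delta$ is effective and $K_X+\Delta$ is $\mathbb{R}$-Cartier. So we must show three things: $R$ is rational, $R$ is relatively ample at infinity, and the length of $R$ (as in Remark \ref{b-rem3.3}) coincides with, or is bounded above by, the quantity $l(R)$ of Theorem \ref{thm:maintheorem}.

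\textbf{Step 1: the cone theorem for lc pairs.} Every $(K_X+\Delta)$-negative extremal ray of $\NE(X)$ is rational and spanned by a rational curve $C$ with
\[
0<-(K_X+\Delta)\cdot C\le 2n .
\]
This is Fujino's cone and contraction theorem for log canonical pairs. Rationality gives a supporting nef Cartier divisor $H_R$ with $H_R^\perp\cap\NE(X)=R$. The contraction $\varphi_R$ exists, and $H_R$ is pulled back from an ample divisor on its target.

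\textbf{Step 2: relative ampleness at infinity.} I recall the definition as: $-(K_X+\Delta)$ is $\varphi_R$-ample on the non-klt locus $\Nklt(X,\Delta)$, or equivalently $R\cap \NE(X)_{\Nlc(X,\Delta)}=0$ in the quasi-log language. For a log canonical pair $\Nlc(X,\Delta)=\emptyset$, so the condition holds automatically. The step needing care is matching the paper's precise definition; I expect it to be vacuous for lc pairs because the non-lc locus is empty.

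\textbf{Step 3: comparing lengths.} The length of $R$ in Remark \ref{b-rem3.3} should be the minimum of $-(K_X+\Delta)\cdot C$ over rational curves $C$ with $[C]\in R$. This minimum exists because the degrees against the Cartier divisor $H$ lie in a discrete set once $K_X+\Delta$ is suitably approximated. In any case it equals the infimum $l(R)$ of Theorem \ref{thm:maintheorem}, so the hypothesis ``length $>n$'' gives $l(R)>n$.

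If instead the length is defined via all curves, the infimum over rational curves is at least as large, so the hypothesis still passes to $l(R)$. Theorem \ref{thm:maintheorem} then yields $X\simeq\mathbb{P}^n$, $(X,\Delta)$ terminal, and $\deg\Delta<1$.

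The main obstacle is bookkeeping in Steps 2–3: ensuring the conventions of ``rational'', ``relatively ample at infinity'' and ``length'' in the lc cone theorem agree with those in Theorem \ref{thm:maintheorem}. No new geometry is needed.
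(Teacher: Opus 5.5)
Your proposal is correct and follows the paper's proof. The paper does the same things: it gets rationality of $R$ from the lc cone theorem \cite[Theorem~1.1]{fujino2}, notes that relative ampleness at infinity is automatic because $\Nlc(X,\Delta)=\emptyset$, uses Remark~\ref{b-rem3.3} to identify the length with $l(R)$, and then applies Theorem~\ref{thm:maintheorem}.

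Two small points. Relative ampleness at infinity means $R\cap\NE(X)_{\Nlc(X,\Delta)}=\{0\}$; your first phrasing in terms of the non-klt locus is not equivalent, but your conclusion correctly rests on the non-lc version. The minimum in Step~3 exists for a simple reason. All curves with class in $R$ are proportional, and their degrees against a fixed ample Cartier divisor are positive integers. So the values $-(K_X+\Delta)\cdot C$ form a discrete subset of $\mathbb{R}_{>0}$.
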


\begin{cor}\label{coro:maincor2}
Let $(X, \Delta)$ be an $n$-dimensional 
projective log canonical pair such that $K_{X} + \Delta$ is not nef.  Suppose that
\[
-(K_X+\Delta)\cdot C> n
\] 
for every rational curve $C$ on $X$. Then $X\simeq \mathbb P^n$, $(X,\Delta)$ is terminal, and $\Delta$ is a divisor of degree less than $1$.
\end{cor}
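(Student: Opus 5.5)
The statement to prove is Corollary \ref{coro:maincor2}. I will deduce it from Theorem \ref{thm:maintheorem} by way of Corollary \ref{cor:maincor1}, so the whole task is to produce a single $(K_X+\Delta)$-negative extremal ray whose length exceeds $n$.

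\textbf{Step 1: find a ray.} Since $(X,\Delta)$ is projective log canonical and $K_X+\Delta$ is not nef, the cone theorem for log canonical pairs applies. It gives the decomposition
\[
\NE(X)=\NE(X)_{K_X+\Delta\ge 0}+\sum_j R_j ,
\]
where the $R_j$ are $(K_X+\Delta)$-negative extremal rays. Each $R_j$ is rational, and each is spanned by a rational curve $C_j$ satisfying $0<-(K_X+\Delta)\cdot C_j\le 2n$. Because $K_X+\Delta$ is not nef, the sum cannot consist of the first summand alone, so at least one such ray $R=R_j$ exists.

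\textbf{Step 2: check its length.} In the log canonical setting, the cone theorem also guarantees that each extremal ray is relatively ample at infinity, i.e.\ it avoids the non-lc locus, which here is empty. The length of $R$ (see Remark \ref{b-rem3.3}) is the infimum of $-(K_X+\Delta)\cdot C$ over rational curves $C$ with $[C]\in R$. By hypothesis every such $C$ has $-(K_X+\Delta)\cdot C>n$.

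The main obstacle is that a strict lower bound on each curve does not immediately make the infimum strictly larger than $n$. I would handle this by showing the infimum is attained. The values $-(K_X+\Delta)\cdot C$ range over a discrete set: when $\Delta$ is a $\mathbb{Q}$-divisor they lie in $\frac{1}{m}\mathbb{Z}$ with $m$ the Cartier index, and in general they are bounded in $(0,2n]$. Alternatively, I would use that on the ray $R$ the functional $-(K_X+\Delta)$ is a fixed positive multiple of the degree with respect to a Cartier divisor; that degree is a positive integer, so the infimum is a minimum. Either way the length of $R$ is $>n$.

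\textbf{Step 3: conclude.} We now have a $(K_X+\Delta)$-negative extremal ray of length $>n$, so Corollary \ref{cor:maincor1} (equivalently, Theorem \ref{thm:maintheorem} applied directly to $R$) gives $X\simeq\mathbb P^n$, $(X,\Delta)$ terminal, and $\deg\Delta<1$.
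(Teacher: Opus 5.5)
Your proposal is correct and follows the paper's route. The cone theorem for log canonical pairs produces a $(K_X+\Delta)$-negative extremal ray, which is rational and relatively ample at infinity because $\Nlc(X,\Delta)=\emptyset$, and Corollary~\ref{cor:maincor1} then finishes.

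Your proportionality argument, $-(K_X+\Delta)\cdot C=a\,(H\cdot C)$ with $a>0$ fixed on $R$ and $H$ an ample Cartier divisor, correctly shows that the length is attained, which is the content of Remark~\ref{b-rem3.3}; it works even for $\mathbb{R}$-divisors. Your other justification, that the values are bounded in $(0,2n]$, does not give discreteness on its own and should be dropped.
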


There are similar statements for semi-log canonical pairs; for example, Corollary \ref{cor-slc2} establishes a semi-log canonical version of Corollary \ref{cor:maincor1}.

\begin{rem}
    A celebrated result of Kobayashi and Ochiai characterizes the smooth Fano varieties of large index (see e.g.~\cite{KO82, Ionescu86, Fujita87} and \cite{AD14,FM21} for generalizations to worse singularities).  One might hope for an analogous characterization of varieties with ``long'' extremal rays.  In the smooth setting this is accomplished in \cite{DH17} and in the toric setting by \cite{Fujino06}; it is natural to ask whether this classification can be extended to the singular setting.
\end{rem}

Our proof of Theorem \ref{thm:maintheorem} relies on several ingredients.  First, we need a classification result from \cite{CMSB}.  Second, we use the recently improved bounds in Bend-and-Break established by \cite{JLR25a} and the existence of very free curves on mildly singular Fano varieties proved by \cite{JLR25b}.  Third, we require the theory of quasi-log structures originally introduced by Ambro in \cite{Ambro03}; see \cite[Chapter~6]{fujino3} and \cite{fujino5} for more background.  More precisely, we will need the theory of extremal rays for quasi-log schemes as developed by \cite{fujino6}.

In Section \ref{a-sec2} we present several results on the cone of curves for quasi-log schemes obtained by combining the results of \cite{fujino6} and \cite{JLR25a}.  Throughout we adopt the notation and results established in 
\cite{fujino2}, \cite{fujino3}, \cite{fujino5}, and \cite{fujino6}.  In Section \ref{b-sec3} we prove Theorem \ref{thm:maintheorem} and related results.

\medskip

\begin{ack}
Osamu Fujino was partially supported by JSPS KAKENHI Grant Numbers
JP21H04994 and JP23K20787.  Eric Jovinelly was supported by an NSF postdoctoral research fellowship, DMS-2303335. Brian Lehmann was supported by Simons Foundation grant Award Number 851129.  Eric Riedl was supported by NSF CAREER grant DMS-1945944 and Simons Foundation grants 00011850 and 00013673.

The authors thank Makoto Enokizono and Kento Fujita for useful discussions.
\end{ack}

\section{Extremal rays for quasi-log schemes}\label{a-sec2}

Classical studies on the lengths of extremal rays,  
for example \cite{Kawamata91}, rely on the Bend-and-Break theorem of \cite{miyaoka-mori}.  The bounds in these results can be improved by replacing \cite[Theorem 5]{miyaoka-mori} with \cite[Theorem 1.1]{JLR25a}.  This is equally true for the statements on extremal rays in the quasi-log setting presented below.  We emphasize that such generalizations are not merely of technical interest but are quite useful, e.g.~in proving Theorem \ref{thm:maintheorem} and Corollary \ref{cor:maincor1}.

\begin{thm}[{\cite[Theorem 1.12]{fujino6}}]\label{a-thm1.2}
Let $[X,\omega]$ be a quasi-log scheme and 
let $\varphi\colon X\to W$ be a projective 
morphism of schemes such that $-\omega$ is $\varphi$-ample. 
Let $P$ be an arbitrary closed point of $W$. 
Let $E$ be any positive-dimensional 
irreducible component of $\varphi^{-1}(P)$ 
such that $E\not\subset X_{-\infty}$. 
Then $E$ is covered by {\em{(}}possibly singular{\em{)}} rational curves 
$\ell$ with
\[0<-\omega\cdot \ell \leq \dim E+1. 
\]
In particular, $E$ is uniruled.
\end{thm}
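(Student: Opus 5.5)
The proof will reduce to a Bend-and-Break statement on a log resolution-type model, using adjunction and vanishing for quasi-log schemes; this is the strategy of \cite{fujino6}. The one place the bound $\dim E+1$ (rather than $2\dim E$) enters is the Bend-and-Break step, where \cite[Theorem 1.1]{JLR25a} replaces \cite[Theorem 5]{miyaoka-mori}.

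\textbf{Step 1: reduce to $E$ itself.} Replacing $W$ by an affine neighborhood of $P$, I take $X'$ to be the union of $E$ with $X_{-\infty}$. By adjunction for quasi-log schemes, $X'$ carries a natural quasi-log structure $[X',\omega|_{X'}]$, and $-\omega|_{X'}$ is still ample over $W$. If $E$ is contained in a qlc stratum of smaller dimension than its minimal qlc center, I restrict further to the minimal stratum containing $E$. This reduces to the case where $E$ is an irreducible qlc stratum, i.e.\ a qlc center or a component of the whole space. Since $-\omega$ is relatively ample and $E$ lies in a fiber, $-\omega|_E$ is ample on the projective variety $E$.

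\textbf{Step 2: show $K$ is not pseudo-effective on curves covering $E$.} Take the normalization $\nu\colon E^\nu\to E$. By the structure theory of quasi-log schemes (subadjunction-type results in \cite{fujino6}), there is a pair or generalized pair structure $(E^\nu,\Delta_{E^\nu})$ with $K_{E^\nu}+\Delta_{E^\nu}\sim_{\mathbb R}\nu^*\omega$ up to a nef or pseudo-effective correction term, and with $\Delta_{E^\nu}$ effective away from the preimage of $X_{-\infty}$. The hypothesis $E\not\subset X_{-\infty}$ is used exactly here, to keep the boundary effective on a general curve. Fix a general curve $C\subset E^\nu$ that is a complete intersection of very ample divisors, passing through a general point. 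Then $K_{E^\nu}\cdot C\le \nu^*\omega\cdot C<0$, so $K_{E^\nu}$ is negative on a covering family of curves.

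\textbf{Step 3: Bend-and-Break.} Apply \cite[Theorem 1.1]{JLR25a} in the form: if $Y$ is a normal projective variety of dimension $d$, $L$ is nef on $Y$, and $-K_Y\cdot C>0$ for a general movable curve $C$, then through a general point $y$ there is a rational curve $\ell$ with
\[
L\cdot \ell\le (d+1)\,\frac{L\cdot C}{-K_Y\cdot C}.
\]
I take $L=-\nu^*\omega$, which is ample on $E^\nu$. Replacing $K_Y$ by $\omega$ via the comparison of Step 2 gives $L\cdot C\le -K_{E^\nu}\cdot C$ on the relevant curve, and hence $0<-\omega\cdot\nu_*\ell\le \dim E+1$. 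Positivity holds because $-\omega$ is ample. Since $y$ is general, the images $\nu(\ell)$ cover $E$, so $E$ is uniruled.

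\textbf{Main obstacle.} The hardest step is matching Step 2 with the exact hypotheses of \cite{JLR25a}. That theorem is presumably stated for normal varieties with $K$ (or $K+\Delta$) $\mathbb Q$-Cartier, while $E^\nu$ may only carry the weaker structure coming from quasi-log adjunction. My first attempt is to work on a resolution $\widetilde E\to E^\nu$ and run the argument for the pullback of $-\omega$, which is nef and big there. The plan is then to check that the effective part of the boundary only helps the inequality and that the exceptional contributions do not affect general curves, using the fact that $-K$ appears in the bound only through its intersection with a general movable curve.
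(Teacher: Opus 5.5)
Your overall architecture and your Step 3 match the paper. The paper's proof consists only of the last step of the proof of \cite[Theorem 1.12]{fujino6}: Bend-and-Break on the normalization $\nu\colon\overline{E}\to E$ along a curve $C$ with $0<-\nu^*\omega\cdot C\le -K_{\overline{E}}\cdot C$, with the factor $2\dim\overline{E}$ of \cite{miyaoka-mori} replaced by $\dim\overline{E}+1$ from \cite{JLR25a}.

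The gap is in how you obtain the comparison $-\nu^*\omega\cdot C\le -K_{\overline{E}}\cdot C$, and Step 1 is not valid. Adjunction for quasi-log schemes only puts a quasi-log structure on $X_{-\infty}$ together with a union of qlc strata. But $E$ is merely an irreducible component of the fiber $\varphi^{-1}(P)$, not a qlc stratum. Replacing $X$ by $V\cup X_{-\infty}$, where $V$ is the minimal qlc stratum containing $E$, is legitimate. It does not make $E=V$, however: $E$ is still just a fiber component of $\varphi|_V$.

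For example, let $X$ be a smooth threefold, $\omega=K_X$, and let $\varphi$ contract $E\simeq\mathbb{P}^2$ with normal bundle $\mathcal{O}(-1)$ to a point. Then $X_{-\infty}=\emptyset$, the only qlc stratum is $X$, and nothing induces a quasi-log structure on $E$. So the subadjunction-type formula you invoke in Step 2, which concerns normalizations of qlc strata, says nothing about $E^\nu$. The comparison is therefore unproved in exactly the typical case, a fiber component of a birational or fiber-type contraction.

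That comparison is the genuine content of \cite[Theorem 1.12]{fujino6}. It is the quasi-log version of Kawamata's argument \cite{Kawamata91}, where the inequality for a general curve on the normalization of a fiber component comes from a vanishing theorem, not from adjunction to $E$. The paper does not reprove it: it imports the whole argument of \cite{fujino6} and changes only the Bend-and-Break constant. To close the gap, either quote that inequality from the proof of \cite[Theorem 1.12]{fujino6}, or carry out Kawamata's vanishing argument in the quasi-log setting.

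The point you single out as the main obstacle is not the real difficulty. A general complete intersection curve on the normal variety $\overline{E}$ lies in its smooth locus. So $K_{\overline{E}}\cdot C$ is defined, and Bend-and-Break can be applied along $C$ without any $\mathbb{Q}$-Cartier assumption on $K_{\overline{E}}$.
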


\begin{proof}
In the final step of the proof of \cite[Theorem 1.12]{fujino6} (see 
\cite[p.~676]{fujino6} for details), we use 
\[
\begin{split}
0 < -\nu^*\omega \cdot \Gamma 
&\leq 2 \dim \overline{E} \cdot 
\frac{-\nu^*\omega \cdot C}{-K_{\overline{E}} \cdot C} \\
&\leq 2 \dim \overline{E},
\end{split}
\]
which essentially goes back to \cite{miyaoka-mori}.  
By applying \cite[Theorem 1.1]{JLR25b} instead, we can replace  
$2 \dim \overline{E}$ with $\dim \overline{E} + 1$.  
This yields the desired inequality in Theorem~\ref{a-thm1.2}. 
\end{proof}

\begin{thm}[{Lengths of extremal rational curves, 
\cite[Theorem 1.13]{fujino6}}]\label{a-thm1.3}
Let \([X,\omega]\) be a quasi-log scheme, and let 
\(\pi \colon X \to S\) be a projective morphism of schemes.
Suppose that 
\(R \subset \NE(X/S)\) 
is an \(\omega\)-negative extremal ray which is rational 
and relatively ample at infinity. 
Let $\varphi_R\colon X\to W$ be the contraction morphism 
over $S$ associated to $R$. We put
\[d = \min_E\dim E, 
\]
where $E$ runs over positive-dimensional 
irreducible components of $\varphi^{-1}_R(P)$ 
for all $P\in W$. Then $R$ is 
spanned by a {\em{(}}possibly singular{\em{)}} rational curve $\ell$ with
\[
0 < -\omega \cdot \ell \leq d+1.
\]
\end{thm}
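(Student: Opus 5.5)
Since $R$ is rational and relatively ample at infinity, the cone and contraction theorem for quasi-log schemes \cite{fujino6} provides the contraction morphism $\varphi_R\colon X\to W$ over $S$ associated to $R$. By construction $-\omega$ is $\varphi_R$-ample, and a curve $C\subset X$ is contracted by $\varphi_R$ if and only if $[C]\in R$. Moreover, $\varphi_R$ is not finite because $R$ is a genuine ray of $\NE(X/S)$, so positive-dimensional fibers exist and the minimum $d$ is taken over a nonempty set.

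Choose a closed point $P\in W$ and a positive-dimensional irreducible component $E$ of $\varphi_R^{-1}(P)$ with $\dim E=d$. I would then apply Theorem~\ref{a-thm1.2} to the quasi-log scheme $[X,\omega]$, the morphism $\varphi_R$, and the component $E$. This yields a rational curve $\ell\subset E$ with
\[
0<-\omega\cdot\ell\leq \dim E+1=d+1.
\]
Since $\ell$ lies in a fiber of $\varphi_R$, it is contracted, so $[\ell]\in R$. The inequality $-\omega\cdot\ell>0$ shows that $[\ell]\neq 0$, hence $[\ell]$ spans $R$.

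The main obstacle is the hypothesis $E\not\subset X_{-\infty}$ required by Theorem~\ref{a-thm1.2}, and this is exactly where "relatively ample at infinity" is used. That condition says $\omega|_{X_{-\infty}}$ is $\pi$-ample. If some component $E$ were contained in $X_{-\infty}$, every curve in $E$ would be contracted by $\varphi_R$ and therefore satisfy $\omega\cdot C<0$. On the other hand, the ray $R$ is $\omega|_{X_{-\infty}}$-positive, and every curve contracted by $\varphi_R$ has class in $R$, so $\omega\cdot C>0$ for such curves. This is a contradiction. Consequently $\varphi_R$ is finite on $X_{-\infty}$, and no positive-dimensional fiber component lies in $X_{-\infty}$. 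Since this holds for every such component, in particular for the one realizing the minimum $d$, Theorem~\ref{a-thm1.2} applies and the proof is complete.
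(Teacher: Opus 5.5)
Your outline is the paper's route. The paper reruns Fujino's proof of \cite[Theorem 1.13]{fujino6} with Theorem~\ref{a-thm1.2} substituted for \cite[Theorem 1.12]{fujino6}: take a fiber component $E$ of $\varphi_R$ of dimension $d$, check $E\not\subset X_{-\infty}$, and obtain $\ell\subset E$ with $0<-\omega\cdot\ell\le d+1$ and $[\ell]\in R$.

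However, the step you single out as the main obstacle uses a hypothesis you do not have. ``Relatively ample at infinity'' does not mean that $\omega|_{X_{-\infty}}$ is $\pi$-ample. It means $R\cap\NE(X/S)_{-\infty}=\{0\}$. Equivalently, a supporting divisor $H_R$ of $R$ (for instance the pullback by $\varphi_R$ of a relatively ample divisor on $W$) restricts to a $\pi$-ample divisor on $X_{-\infty}$. Nothing in the hypotheses gives $\omega\cdot C>0$ for curves $C\subset X_{-\infty}$. The phrase ``$R$ is $\omega|_{X_{-\infty}}$-positive'' also conflicts with $R$ being $\omega$-negative. So as written, your contradiction is derived from a condition that is not assumed.

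The repair is immediate and does not involve the sign of $\omega$ on $X_{-\infty}$. Suppose $E\subset X_{-\infty}$ and take any curve $C\subset E$. Then $C$ is contracted by $\varphi_R$, so $[C]\in R$. Also $[C]\neq 0$, since $-\omega$ is $\varphi_R$-ample and hence $-\omega\cdot C>0$. On the other hand, $C\subset X_{-\infty}$ gives $[C]\in\NE(X/S)_{-\infty}$, contradicting $R\cap\NE(X/S)_{-\infty}=\{0\}$. Equivalently, $H_R\cdot C=0$ while $H_R|_{X_{-\infty}}$ is $\pi$-ample. Hence $\varphi_R|_{X_{-\infty}}$ is finite, no positive-dimensional fiber component lies in $X_{-\infty}$, and the rest of your argument goes through unchanged.
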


\begin{proof}
The proof of \cite[Theorem~1.13]{fujino6} applies verbatim.  
The only change is to replace \cite[Theorem~1.12]{fujino6}  
with Theorem~\ref{a-thm1.2} in the proof of \cite[Theorem~1.13]{fujino6} (see 
\cite[p.~676]{fujino6} for details). 
\end{proof}

\begin{cor}[{\cite[Corollary 12.3]{fujino6}}]\label{a-cor1.4} 
Let $X$ be a normal variety and let $\Delta$ be an 
effective $\mathbb R$-divisor on $X$ such that $K_X+\Delta$ is $\mathbb R$-Cartier. 
Let $\pi\colon X\to S$ be a projective morphism of 
schemes. Let $R$ be a $(K_X+\Delta)$-negative extremal ray of 
$\NE(X/S)$ that is 
rational and relatively ample at infinity. Let $\varphi_R\colon X\to W$ 
be the contraction morphism over $S$ associated to $R$. We put
\[
d = \min_E \dim E, 
\]
where $E$ runs over positive-dimensional irreducible components of $\varphi^{-1}_R(P)$ 
for all $P\in W$. Then $R$ is spanned by a {\em{(}}possibly singular{\em{)}} 
rational curve $\ell$ with
\[
0 < -(K_X+\Delta)\cdot \ell \leq  d+1.
\]
Furthermore, if $\varphi_R$ is birational and $(X, \Delta)$ is kawamata 
log terminal,
then $R$ is spanned by a {\em{(}}possibly singular{\em{)}} rational 
curve $\ell$ with 
\[
0 < -(K_X+\Delta)\cdot \ell <d+1. 
\]

Lastly, let $V$ be an 
irreducible component of the degenerate 
locus 
\[
\bigl\{x \in X \mid \varphi_R  \ \text{is not an isomorphism at}\ x\bigr\}
\] of 
$\varphi_R$. Then $V$ is uniruled.
\end{cor}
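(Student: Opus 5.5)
We follow the proof of \cite[Corollary 12.3]{fujino6}, putting a natural quasi-log structure on $X$ and then applying Theorem~\ref{a-thm1.3} and Theorem~\ref{a-thm1.2} in place of \cite[Theorems 1.12, 1.13]{fujino6}.

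\emph{Step 1: the quasi-log structure.} Since $X$ is normal and $K_X+\Delta$ is $\mathbb R$-Cartier, the pair $(X,\Delta)$ has a natural quasi-log structure $[X,\omega]$ with $\omega=K_X+\Delta$. Concretely, take a log resolution $f\colon Y\to X$, write $K_Y+\Delta_Y=f^*(K_X+\Delta)$, and use $f\colon (Y,\Delta_Y)\to X$ as the quasi-log resolution. Its non-qlc locus is $X_{-\infty}=\Nlc(X,\Delta)$, and the qlc strata are the images of the lc centers. This is \cite[Chapter~6]{fujino3}.

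\emph{Step 2: lengths.} The ray $R$ is $\omega$-negative, rational and relatively ample at infinity. These are exactly the hypotheses of Theorem~\ref{a-thm1.3} applied to $[X,\omega]$ and $\pi$, so the first inequality $0<-(K_X+\Delta)\cdot\ell\le d+1$ follows at once.

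\emph{Step 3: uniruledness of the degenerate locus.} Let $V$ be an irreducible component of the degenerate locus. We show that $V$ is covered by positive-dimensional components $E$ of fibers of $\varphi_R$ with $E\not\subset X_{-\infty}$, and then Theorem~\ref{a-thm1.2} gives uniruledness.
\begin{itemize}
\item If $\varphi_R$ is birational, the degenerate locus is a union of positive-dimensional fiber components.
\item If $\varphi_R$ is not birational, it is the whole of $X$, which is covered by positive-dimensional fibers.
\end{itemize}
The delicate point is that $E$ must not lie in $X_{-\infty}$. Relative ampleness at infinity means that $\omega|_{X_{-\infty}}$ is $\varphi_R$-ample, so no curve of $R$ lies in $X_{-\infty}$. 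Hence $\varphi_R$ is finite on $X_{-\infty}$, and every positive-dimensional $E$ satisfies $E\not\subset X_{-\infty}$.

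\emph{Step 4: the strict inequality in the klt birational case.} This is the main obstacle. In \cite{fujino6} the strict inequality comes from a perturbation argument. Since $(X,\Delta)$ is klt and $\varphi_R$ is birational, we choose a small effective $\varphi_R$-ample-type perturbation: replace $\Delta$ by $\Delta+\varepsilon H$, where $H$ is an effective divisor with $-H$ being $\varphi_R$-ample on the exceptional locus, or equivalently $H$ contains the exceptional locus.
\begin{itemize}
\item For $0<\varepsilon\ll1$ the pair $(X,\Delta+\varepsilon H)$ stays klt, or at least the relevant fiber components avoid its non-lc locus.
\item $R$ remains $(K_X+\Delta+\varepsilon H)$-negative.
\item Every curve in $R$ satisfies $H\cdot\ell<0$.
\end{itemize}
Applying the already proved non-strict bound to $(X,\Delta+\varepsilon H)$ gives a rational curve $\ell$ spanning $R$ with $-(K_X+\Delta)\cdot\ell\le d+1+\varepsilon H\cdot\ell<d+1$.

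The subtlety is that the rational curve produced depends on $\varepsilon$. The fix is to use instead an effective $\mathbb Q$-Cartier divisor $H$ that is $\varphi_R$-antiample near the exceptional locus. Such an $H$ exists because $\varphi_R$ is birational: take the pullback of an ample divisor on $W$ minus a small multiple of an ample divisor on $X$. With this choice the bound is uniform, and the strictness follows. Checking that the perturbed pair still satisfies relative ampleness at infinity, which is trivial when $X_{-\infty}=\emptyset$ in the klt case, completes the argument.
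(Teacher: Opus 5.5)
Your proposal is correct and follows the paper's route. The paper reruns Fujino's proof of \cite[Corollary~12.3]{fujino6} with Theorems~\ref{a-thm1.2} and~\ref{a-thm1.3} in place of \cite[Theorems~1.12 and~1.13]{fujino6}, and your Steps~1--4 (the natural quasi-log structure, Theorem~\ref{a-thm1.3} for the bound, Theorem~\ref{a-thm1.2} on fiber components not contained in $X_{-\infty}$ for uniruledness, and perturbing $\Delta$ by $\varepsilon H$ with $-H$ $\varphi_R$-ample for the strict klt inequality) are an unpacking of that proof.

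Only the wording of Step~4 needs tidying. The dependence of $\ell$ on $\varepsilon$ is no obstacle, since for any single small $\varepsilon$ you already get $-(K_X+\Delta)\cdot\ell\le d+1+\varepsilon H\cdot\ell<d+1$. Also, ``$H$ contains the exceptional locus'' is not equivalent to $-H$ being $\varphi_R$-ample. The latter is the property you actually use, and your final choice $H\sim_{\mathbb Q}\varphi_R^*M-A$ supplies it (it is effective for $M$ sufficiently ample, since $\varphi_R$ is birational).
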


\begin{proof}
All we need to do is to replace  
\cite[Theorem~1.12]{fujino6} and \cite[Theorem~1.13]{fujino6}  
with Theorem~\ref{a-thm1.2} and Theorem~\ref{a-thm1.3}, respectively,  
in the proof of \cite[Corollary~12.3]{fujino6} (see \cite[p.~677]{fujino6} for details).  
The desired statement then follows. 
\end{proof}

The following corollary is an immediate consequence of the cone and contraction theorem 
for quasi-log schemes (see \cite[Theorems 6.7.3 and 6.7.4]{fujino3}) 
together with Theorem~\ref{a-thm1.3}.

\begin{cor}[{\cite[Corollary 1.8]{fujino4}}]\label{a-cor1.5}
Let $[X, \omega]$ be a quasi-log canonical pair and let 
$\pi\colon X\to S$ be a projective morphism onto a scheme $S$. 
Let $\mathcal L$ be a $\pi$-ample 
line bundle on $X$. 
We put 
\[
d:=\max_{s\in S} \dim \pi^{-1} (s). 
\]
Then $\omega+(d+1)\mathcal L$ is $\pi$-nef.  
In particular, $\omega+(\dim X+1)\mathcal L$ is always 
$\pi$-nef. 
\end{cor}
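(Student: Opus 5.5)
We argue by contradiction. Assume $\omega+(d+1)\mathcal L$ is not $\pi$-nef. Since $[X,\omega]$ is quasi-log canonical we have $X_{-\infty}=\emptyset$. Consequently every $\omega$-negative extremal ray of $\NE(X/S)$ is automatically relatively ample at infinity, and the cone theorem for quasi-log schemes \cite[Theorem 6.7.4]{fujino3} applies to all of them. In this form it gives
\[
\NE(X/S)=\NE(X/S)_{\omega\geq 0}+\sum_j R_j,
\]
where the $R_j$ are rational $\omega$-negative extremal rays, discrete in the half-space $\omega<0$.

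Now choose $z\in\NE(X/S)$ with $(\omega+(d+1)\mathcal L)\cdot z<0$. Because $\mathcal L$ is $\pi$-ample, $\mathcal L\cdot z>0$ for $z\neq 0$, so $\omega\cdot z<0$. Write $z$ as a sum of a class in $\NE(X/S)_{\omega\geq 0}$ and classes in the $R_j$. On the first part $\omega+(d+1)\mathcal L$ is positive. Hence some extremal ray $R=R_j$ must satisfy $(\omega+(d+1)\mathcal L)\cdot R<0$. (If the sum is infinite, one can also run the standard argument with a slightly perturbed ample class: only finitely many rays are negative for $\omega+\varepsilon\mathcal L$.)

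Next apply Theorem~\ref{a-thm1.3} to $R$. Let $\varphi_R\colon X\to W$ be the contraction over $S$ provided by \cite[Theorem 6.7.3]{fujino3}. Every fiber of $\varphi_R$ lies in a fiber of $\pi$, so any positive-dimensional irreducible component $E$ of a fiber satisfies $\dim E\leq d$. Therefore the number $d_R=\min_E\dim E$ of Theorem~\ref{a-thm1.3} is at most $d$, and $R$ is spanned by a rational curve $\ell$ with
\[
0<-\omega\cdot\ell\leq d_R+1\leq d+1.
\]
Since $\mathcal L$ is a line bundle and $\pi$-ample, $\mathcal L\cdot\ell$ is a positive integer, so $\mathcal L\cdot\ell\geq 1$. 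This gives
\[
(\omega+(d+1)\mathcal L)\cdot\ell\geq -(d+1)+(d+1)=0,
\]
which contradicts the choice of $R$. So $\omega+(d+1)\mathcal L$ is $\pi$-nef.

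The last assertion follows because $\dim\pi^{-1}(s)\leq\dim X$ for every $s$, so $\omega+(\dim X+1)\mathcal L=\bigl(\omega+(d+1)\mathcal L\bigr)+(\dim X-d)\mathcal L$ is a sum of a $\pi$-nef class and a nonnegative multiple of a $\pi$-ample class. The main technical point is reducing from a general class $z$ to a single extremal ray, and this is exactly what the cone theorem supplies. The bound on $d_R$ via fibers of $\pi$ is routine.
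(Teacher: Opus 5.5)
Your proposal is correct and follows the paper's route: the paper states this corollary is an immediate consequence of the cone and contraction theorems for quasi-log schemes together with Theorem~\ref{a-thm1.3}. You spell that out in the same way, using $X_{-\infty}=\emptyset$ to reduce to a single rational $\omega$-negative extremal ray, bounding $d_R\le d$ because fibers of $\varphi_R$ lie in fibers of $\pi$, and using $\mathcal L\cdot\ell\ge 1$.
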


Theorem~\ref{a-thm1.2} 
and \cite[Theorem~1.8]{fujino-hashizume} 
(see Conjectures 1.15 and 1.21 in \cite{fujino5}) 
represent the optimal results on this topic in full generality.
For the sake of completeness, we also record here some remarks on 
other statements in \cite{fujino6}.

\begin{rem}\label{a-rem2.1}
In the proof of \cite[Proposition~9.1]{fujino6} (see \cite[p.~665]{fujino6} for details), 
the bound 
\[
0<-(K_X+\Delta)\cdot C \leq 2\dim X
\]
was obtained by using 
\cite[Theorem~1.1]{fujino2}, 
\cite[Theorem~1.12]{fujino6}, or 
\cite[Corollary~12.3]{fujino6}. 
By replacing these results with Theorem~\ref{a-thm1.2} or 
Corollary~\ref{a-cor1.4}, we can improve the bound to 
\[
0<-(K_X+\Delta)\cdot C \leq \dim X+1
\]
in the statement of \cite[Proposition~9.1]{fujino6}.

Since \cite[Theorem~1.8]{fujino6} is an application of 
\cite[Proposition~9.1]{fujino6}, the same improvement applies; 
namely, we may replace
\[
0<-(K_X+\Delta)\cdot C \leq 2\dim X
\]
with
\[
0<-(K_X+\Delta)\cdot C \leq \dim X+1
\]
in \cite[Theorem~1.8]{fujino6} (see \cite[p.~667--668]{fujino6} for details).  

Consequently, because \cite[Theorem~9.2]{fujino6} is derived from 
\cite[Theorem~1.8]{fujino6}, we can similarly replace
\[
0<-\mathcal P\cdot C \leq 2\dim X
\]
with
\[
0<-\mathcal P\cdot C \leq \dim X+1
\] 
(see \cite[pp.~668--669]{fujino4} for details). 

Applying this once more to 
\cite[Theorem~1.6\,(iii)]{fujino6}, which depends on 
\cite[Theorem~9.2]{fujino6}, we obtain the improved inequality
\[
0<-\omega\cdot C_j \leq \dim U_j +1
\]
in place of
\[
0<-\omega\cdot C_j \leq 2\dim U_j 
\] (see \cite[p.~670]{fujino6} for details). 

Finally, since \cite[Theorem~1.5]{fujino6} is a special case of 
\cite[Theorem~1.6]{fujino6}, we may also replace
\[
0<-(K_X+\Delta)\cdot C_j \leq 2\dim U_j
\]
with the sharper bound
\[
0<-(K_X+\Delta)\cdot C_j \leq \dim U_j+1.
\]

Moreover, Theorem~\ref{a-thm1.2} also implies 
\cite[Conjecture~1.21]{fujino6} (see also \cite[Remark~1.22]{fujino6}). 
\end{rem}

\section{A Characterization of Projective Space}\label{b-sec3}

In this section we first prove a weaker analogue of Theorem \ref{thm:maintheorem} for terminal varieties.  We then reduce the general case of Theorem \ref{thm:maintheorem} to the terminal case using the theory of quasi-log schemes.

\subsection{Terminal varieties}

This section focuses on the special case of Theorem \ref{thm:maintheorem} when $X$ has terminal singularities.  We will need the following definition which extends the notion of ``very free'' from rational curves to curves of arbitrary genus.  The key property of very free curves is that they deform with the expected dimension.

\begin{defn}
Let $X$ be a projective variety.  A morphism $s: C \to X$ from a (smooth projective integral) curve $C$ is said to be very free if $s(C)$ lies in the smooth locus of $X$ and every non-zero quotient of $s^{*}T_{X}$ has slope at least $2g(C)+1$.   
\end{defn}

\cite[Theorem 1.3]{JLR25b} guarantees that every terminal Fano variety admits a very free curve in its smooth locus; furthermore, we can ensure that the degree is arbitrarily large compared to the genus.  We prove Theorem \ref{thm:maintheorem} for terminal varieties by studying how very free curves deform and break into rational curves.

The following lemma shows that in our situation a general very free curve of high degree passes through many general points of $X$.

\begin{lem} \label{lem-manyGeneralPoints}
    Let $X$ be a terminal Fano variety of dimension $n$ and suppose $c>0$ is chosen so that $cK_{X}$ is Cartier.  Suppose every rational curve on $X$ has anticanonical degree $>n$. Let $g,d > 0$ be positive integers and define
    \begin{equation*}
        t = \max \left\{ \left\lceil \frac{d}{n} \right\rceil - (cn+1)g - cn, 0 \right\}
    \end{equation*}
    Then every genus $g$ very free curve $f: C \to X$ of anticanonical degree $d$ has a deformation that maps a general set of $t$ points in $C$ to a general set of $t$ points in $X$.
\end{lem}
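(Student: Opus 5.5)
The plan is induction on the number $k$ of points, $0\le k\le t$. The statement for $k$ points: for general $p_1,\dots,p_k\in C$ and general $x_1,\dots,x_k\in X$ there is a deformation $f'$ of $f$ with $f'(p_i)=x_i$. Let $M$ be the component of $\mathrm{Mor}(C,X)$ containing $[f]$. Because $f$ is very free, $f^*T_X$ has no quotient of slope $\le 2g-1$, so $H^1(C,f^*T_X(-D))=0$ for every effective divisor $D$ of degree $\le 1$. Hence $M$ is smooth at $[f]$ of the expected dimension $d+n(1-g)$, and near $[f]$ the evaluation map at one point is smooth.

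For the inductive step, suppose the claim holds for $k<t$. Let $M_k\subset M$ be the locus of maps with $f'(p_i)=x_i$ for $i\le k$. By the induction hypothesis and generic smoothness of the evaluation map $M\to X^k$, the set $M_k$ is nonempty and of dimension at least $d+n(1-g)-nk$ at a general member. Consider the evaluation $e\colon M_k\times C\to X$. It suffices to show that $e$ is dominant, since then a general $p_{k+1}$ can be sent to a general $x_{k+1}$.

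Suppose instead that $Z=\overline{e(M_k\times C)}$ has dimension $\le n-1$. Then for a general point $x\in Z$ and a general $q\in C$ with $x$ in the image of $q$, the maps in $M_k$ with $f'(q)=x$ form a family $N$ with
\[
\dim N \;\ge\; d+n(1-g)-nk+1-(n-1)-1 \;=\; d-ng-nk+1 .
\]
Here $t\le \lceil d/n\rceil-(cn+1)g-cn$ and $k<t$, so this dimension is large compared with $g$. Concretely, it exceeds $g$ plus the number of further breakings that the degree $d$ allows.

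Now apply bend-and-break: Mori's argument, or the improved form in \cite[Theorem 1.1]{JLR25a}. A positive-dimensional family of maps $C\to X$ with fixed source and fixed image of the point $q$ (together with the $k$ fixed points) must degenerate. The degeneration is to a stable map consisting of a map from $C$ together with a nonempty tree of rational curves, and one of those rational curves passes through the fixed point. I would iterate this, fixing more points or cutting $N$ by hyperplane conditions, to produce a limit in which many rational components split off. Each rational component has anticanonical degree $>n$. Since $cK_X$ is Cartier, each degree lies in $\frac1c\mathbb Z$, so each is at least $n+\frac1c$. The total degree is $d$, so at most about $d/n$ components can break off, and the degree left on the genus $g$ component is correspondingly forced down.

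The correction terms $(cn+1)g$ and $cn$ in $t$ are meant to absorb two costs. The first is the $g$ extra parameters, from $\mathrm{Aut}$- and $\mathrm{Pic}$-type contributions, that are needed before a higher-genus family is forced to break. The second is the rounding coming from the $\frac1c$-integrality. Counting dimensions on the limit should then contradict $\dim N\ge d-ng-nk+1$. The mechanism is that the rational tails through $x$ sweep out a subvariety that would have to lie in $Z$, yet they carry too much of the deformation space. This contradiction shows $e$ is dominant.

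I expect the main obstacle to be this bend-and-break bookkeeping. One has to pin down exactly how many dimensions of deformations with fixed points a genus $g$ curve needs before it must shed a rational curve, and then ensure that the shed rational curves lie in $Z$, or otherwise yield a contradiction. I would first try to use the quantitative bound of \cite{JLR25a} directly: a family of dimension $>g$ (roughly) through a fixed point produces a rational curve through that point with anticanonical degree $\le$ (current degree)$\cdot\frac{n+1}{\dim}$. This should be combined with the hypothesis that every rational curve has degree $>n$, and with the choice of $t$, to reach a numerical contradiction.
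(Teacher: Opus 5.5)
\textbf{There is a genuine gap.} Your ingredients are the right ones: the expected dimension $d-(g-1)n$ of $M$, one extra parameter whenever an evaluation map fails to be dominant, bend-and-break, and the bound $n+\frac1c$ on rational curves. But the step that actually proves the lemma is only asserted. The contradiction in your inductive step is never derived, and the mechanisms you propose do not produce one.

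\begin{itemize}
\item A single failure of dominance buys only one extra dimension.
\item A limit in which ``many rational components split off'' is not contradictory by itself, since degree $d$ has room for roughly $d/n$ of them.
\item The fact that the tails lie in $Z$ contradicts nothing.
\item Even granting bend-and-break in the quantitative form you state, $\dim N\le\dim M\le d$ makes $d(n+1)/\dim N$ at least $n+1$. That bound is compatible with the hypothesis $>n$.
\end{itemize}

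There is also a smaller problem with the reduction. Dominance of $e\colon M_k\times C\to X$ does not let you send a \emph{general} $q$ to a general point; you need $\mathrm{ev}_q\colon M_k\to X$ dominant for a general fixed $q$. Your count for $\dim N$ really measures the failure of this stronger condition. Once you use it, $Z$ may be all of $X$, so the ``tails lie in $Z$'' idea is lost anyway.

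\textbf{The missing idea is a global count over many fixed points, which is how the paper argues.}

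\begin{enumerate}
\item Keep imposing conditions $q_i\mapsto p_i$, with $q_i\in C$ general and $p_i$ general in the locus $Y_i$ swept out by $q_i$. Generality gives $\dim Y_i\ge\dim Y_{i+1}$, so the dominant steps come first. In your framing, once step $k+1$ fails, every later step fails too.
\item Let $\theta(i)$ be the number of non-dominant steps among the first $i$. Stop at the largest $k$ with $\dim M_k\ge\theta(k)=:k_1$. The stopping rule gives $\dim M_k\le k_1+n$. Comparing with $\dim M_k\ge d-(g-1)n-nk+k_1$ yields $d\le (g+k)n$.
\item On a resolution $\pi\colon X'\to X$, apply the multi-point bend-and-break \cite[Lemma 2.1]{JLR25a} to the strict transforms. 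This gives a limit with rational trees at all $k_1$ non-dominant points, each of $-K_X$-degree at least $n+\frac1c$.
\item You also need a lower bound for the spine, which your sketch never supplies. The spine still passes through the remaining general points. So $f'^{*}T_{X'}$ twisted down by all but one of them is generically globally generated, giving degree at least $(k-h-1)n$, where $h\ge k_1$ is the number of trees. The resolution is needed because the limiting spine may meet $\mathrm{Sing}\,X$. Terminality then gives $-K_X\cdot\pi_*f'_*C\ge -K_{X'}\cdot f'_*C$.
\item Altogether $(k-1)n+\frac{k_1}{c}\le d\le (g+k)n$. Hence $k_1\le (g+1)cn$ and $k-k_1\ge\lceil d/n\rceil-g-(g+1)cn=t$.
\end{enumerate}

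This also corrects your reading of the constant. The correction term is $g+(g+1)cn$. The $g$ comes from $\dim M_0=d-(g-1)n$ together with the stopping rule, not from automorphism or Picard parameters. The $(g+1)cn$ is the number of $\frac1c$-surpluses the degree budget can absorb. Without this accounting the proposal does not prove the lemma.
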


\begin{proof}
    Since $cK_{X}$ is a Cartier divisor, our assumption implies that every rational curve on $X$ has anticanonical degree $\geq n + \frac{1}{c}$.
    
    Let $M_0$ denote the irreducible component of the moduli space $\mathrm{Mor}(C,X)$ containing our original very free curve $f: C \to X$. 
    We inductively construct a family $M_k$ of curves in the following way. Suppose we have constructed a family $M_i$ sending general points $q_1, \dots, q_i \in C$ to fixed points $p_1, \dots, p_i \in X$.  Select a general point $q_{i+1} \in C$ and let $Y_{i+1} \subset X$ be the subvariety swept out by $f(q_{i+1})$ as $f$ varies over $M_i$. Fix some general $p_{i+1} \in Y_{i+1}$ and let $M_{i+1} \subset M_i$ be the family of curves that send $q_{i+1}$ to $p_{i+1}$. 
    By construction, $\dim M_{i+1} = \dim M_i - \dim Y_{i+1}$.  
    
    Define $\theta(i)$ to be the number of indices $j \leq i$ for which $\dim Y_j < n$.  Because $\dim M_i$ is eventually 0 and $\theta(i)$ is eventually positive, there is a largest integer $k$ such that $\dim M_{k} \geq \theta(k)$.  The family $M_k$ sends $q_1, \ldots, q_k$ to $p_1,\ldots, p_k$ and has dimension between $\theta(k)$ and $\theta(k) + n$.  For ease of notation, set $k_1 = \theta(k)$ and $k_0 = k - \theta(k)$.  

    Because we chose each $q_i$ generally, $\dim Y_i \geq \dim Y_{i+1}$ for all $i \geq 1$.  Thus the set $\{p_1, \ldots , p_{k_0}\}$ is a general collection of $k_0$ points in $X$.  Our goal is to show that $k_0$ is at least $t$.  To do so, consider a resolution of singularities $\pi: X' \to X$ that is an isomorphism over the smooth locus.  Each point $p_i$ lies in the smooth locus of $X$ because it is a general point on a curve that meets the smooth locus.

    We apply \cite[Lemma 2.1]{JLR25a} to the strict transform on $X'$ of the curves parametrized by $M_k$.  This produces a stable map $f': C' \to X'$ where the stabilization of $C'$ is $C$ and the preimages under $C' \to C$ of $q_{k-k_1+1}, \dots, q_k \in C$ are trees of rational curves.  For the rest of the proof, we consider $C$ as the unique irreducible component of $C'$ with positive genus and $q_i \in C \subset C'$ as points of $C'$.

To fix notation, let $p_i' \in X'$ be the unique point that maps to $p_i \in X$ under $\pi$.  Let $I \subset \{1, \ldots , k\}$ be the set of indices $i$ such that $C'$ contains a rational tree attached to $C$ at $q_i$.  Set $h = |I|$.  For each $i \in I$, there is a distinct rational component $R_i \subset C'$ that is not contracted by $f'$ and whose $f'$-image contains $p_i'$.  These rational curves cannot be contracted by $\pi$, so our hypothesis on $X$ implies $$-K_X \cdot \big(\sum_{i \in I} \pi_*f'_*R_i\big) \geq h\left(n+\frac{1}{c}\right).$$
    
We claim that $-K_X \cdot \pi_{*}f'_*C' \geq (k-1)n + \frac{1}{c} k_1$.  If $h = k$, this follows from the previous paragraph because $X$ is Fano.  Otherwise, if $h < k$, observe that any index $i \notin I$ satisfies $i \leq k_0$ by construction.  In particular, $\{p_i'\}_{i \notin I}$ is a general set of $k - h$ points in $X'$.  As $f'(q_i) = p_i'$ for each $i \notin I$, the twist down of $f'^*(T_{X'})|_C$ by all but one of these $k - h$ points is generically globally generated.   Since a generically globally generated vector bundle on $C$ has degree $\geq 0$, we conclude that $-K_{X'} \cdot f'_*C \geq (k -h - 1)n$.  Terminality of $X$ also ensures $-K_{X} \cdot \pi_{*}f'_{*}C \geq -K_{X'} \cdot f'_*C$.  Using the fact that $h \geq k_1$, we obtain
    \begin{align*}
        -K_X \cdot \pi_{*}f'_*C' & \geq -K_X \cdot \left( \pi_*f'_*C + \sum_{i \in I} \pi_*f'_*R_i\right)\\
        & \geq  (k - h - 1) n + h\left(n+\frac{1}{c}\right) \\
        &\geq (k-1)n + \frac{1}{c} k_1.
    \end{align*}
    This proves our claim.
    
    Since the dimension of $M_0$ is $d-(g-1)n$, the dimension of $M_k$ is at most $k_1 + n$, and the codimension of $M_k$ in $M_0$ is at most $n(k-k_1) + (n-1)k_1$, it follows that 
    $$d \leq (g-1)n + k_1 + n + nk - k_1 = (g+k)n.$$
    Putting this together with $(k - 1)n + \frac{1}{c} k_1 \leq d$, we see that
    $$ (k-1)n+\frac{1}{c} k_1 \leq gn + kn $$
    which shows that $k_1 \leq (g+1)nc$.  Using the bound $d \leq (g+k)n$ from above, we find that $k_{0} = k-k_{1} \geq \lceil \frac{d}{n} \rceil - g - (g+1)cn$ as desired.
\end{proof}

    Using Lemma \ref{lem-manyGeneralPoints}, we carefully select a degeneration of a very free curve of large degree.  We show this degeneration breaks our curve into two pieces: a very free rational curve of degree $n+1$ and a higher-genus very free curve. The very free rational curve allows us to apply \cite{CMSB} and conclude that $X$ is $\mathbb{P}^n$.

\begin{thm} \label{thm:vfcurveexists}
    Let $X$ be a terminal Fano variety of dimension $n$ such that every rational curve in $X$ has anticanonical degree  $>n$.  Then $X \simeq \mathbb{P}^{n}$.
\end{thm}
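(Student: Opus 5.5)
We follow the strategy announced before the statement: produce a very free rational curve of anticanonical degree exactly $n+1$ through a general point, and then invoke the characterization of \cite{CMSB}. By that result, if a general point $x\in X$ lies on a family of rational curves of degree $n+1$ whose members through $x$ cover $X$ (equivalently, if there is a very free rational curve in the smooth locus with $-K_X$-degree $n+1$ through a general point), then $X\simeq\mathbb{P}^n$.

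\textbf{Step 1: a very free curve of large degree.} Fix $c>0$ with $cK_X$ Cartier. By \cite[Theorem 1.3]{JLR25b}, $X$ carries a very free curve $f\colon C\to X$ of genus $g$ in its smooth locus, with anticanonical degree $d$ arbitrarily large relative to $g$. We choose $d$ so large that the integer $t$ of Lemma \ref{lem-manyGeneralPoints} satisfies $t\ge g+2$ (or larger if needed). Lemma \ref{lem-manyGeneralPoints} then gives a deformation of $f$ through $t$ general points $p_1,\dots,p_t$ of $X$, imposed at general points $q_i\in C$.

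\textbf{Step 2: degenerate.} Consider the family $M$ of deformations of $f$ sending $q_1,\dots,q_{t-1}$ to $p_1,\dots,p_{t-1}$, and additionally require $q_t\mapsto p_t$ while letting $p_t$ specialize to $p_1$, or more generally cut down by one more general point beyond the expected dimension. Passing to a resolution $X'$ as in the lemma and applying \cite[Lemma 2.1]{JLR25a} (bend-and-break with the improved bounds), the limit stable map $f'\colon C'\to X'$ must sprout a rational tree at one of the marked points. Degree bookkeeping as in the proof of Lemma \ref{lem-manyGeneralPoints} governs the outcome. The genus $g$ component still passes through the remaining general points, so its twist is generically globally generated and its degree is at least roughly $n$ per general point. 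Each non-contracted rational component has degree at least $n+\frac1c$, by hypothesis and terminality, using $-K_X\cdot\pi_*\ge -K_{X'}\cdot$. Summing, the total degree $d$ leaves room for exactly one rational component, of degree at most $n+1$, hence exactly $n+1$ since it exceeds $n$ and $cK_X$ is Cartier. This rational component passes through a general point $p_1$.

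\textbf{Step 3: conclude.} A rational curve $R$ through a general point, of $-K_X$-degree $n+1$ and lying in the smooth locus, is free; its deformations through the general point must cover $X$. Bend-and-break rules out degree splitting, since any further breaking would yield rational curves of degree $\le n$, contradicting the hypothesis. Hence the family of such curves through a general point is unsplit of dimension $n-1$, and \cite{CMSB} (or Kebekus's version) gives $X\simeq\mathbb{P}^n$.

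\textbf{Main obstacle.} The crux is Step 2, where the degree count must be made sharp. Lemma \ref{lem-manyGeneralPoints} controls degrees only up to additive errors like $(cn+1)g$, so the choice of which points to fix must force the rational tail to absorb precisely $n+1$ of degree while the higher-genus part remains very free. We would first fix as many general points as the lemma permits and degenerate by imposing one additional point. The estimate $-K_{X'}\cdot f'_*C\ge(\#\text{general points}-1)n$ combined with $d\le(g+k)n$ should then pin the tail degree into $(n,n+1]$. A secondary worry is ensuring the tail lies in the smooth locus. This should follow because it passes through a general point and is free, so it can be deformed away from the codimension-$\geq 3$ singular locus of the terminal variety $X$.
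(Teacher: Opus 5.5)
Your outline matches the paper's: a very free curve of large degree, Lemma \ref{lem-manyGeneralPoints}, bend-and-break on a resolution $X'$, degree bookkeeping, then \cite{CMSB}. However, Step 2, which you rightly call the crux, has a genuine gap, and the fix you propose does not give the needed bound. The bound $-K_{X'}\cdot f'_*C\ge(m-1)n$ for a spine through $m$ general points comes from generic global generation of a twist. Combined with $d\le(g+k)n$ and $m\le k$, it caps the tail degree only at $(g+k-m+1)n\ge(g+1)n$, far above $n+1$.

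The missing idea is to prove that the limiting spine is itself \emph{very free}. Then $h^1(f'^*T_{X'}|_C)=0$, and Riemann--Roch turns the dimension of the incidence-constrained family into the exact bound $-K_{X'}\cdot f'_*C\ge h^0+(g-1)n$. Generic global generation is used only to certify very freeness, which requires at least $2g+2$ general points to survive on the spine. The paper secures this by taking $d>n(1+nc)(3g+\gamma)$ and first proving that the number $h$ of sprouted trees satisfies $s-h\ge 3g$.

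The count must also be exact from the start. Lemma \ref{lem-manyGeneralPoints} allows only $t=s+1$ general points. After imposing them, the very free maps still form a family of dimension $\delta=d-(g-1)n-(s+1)n$, usually much larger than $1$. The paper removes exactly $\delta-1$ further dimensions with conditions $f(r_j)\in D_j$ for general very ample divisors $D_j$, and tracks these in the count: each tree $T_j$ over $r_j$ repays the condition the spine loses. Your instruction to ``cut down by one more general point beyond the expected dimension'' does not yield a one-parameter family with this control. With all of this, the total degree is at least $d+h$, which forces $h=0$ and makes the tail at $q_0$ have degree exactly $n+1$.

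Two further steps fail as written. First, ``$>n$ and $\le n+1$, hence $=n+1$ since $cK_X$ is Cartier'' is wrong. $-K_X$-degrees lie in $\frac1c\mathbb{Z}$, so you only get $\ge n+\frac1c$, and a slack of $\frac1c$ per tail is too small to close the count. The paper keeps all bookkeeping in the integer-valued $-K_{X'}$. It then needs the comparison $-K_{X'}\cdot f'_*R_i\ge-\pi^*K_X\cdot f'_*R_i>n$, which gives $\ge n+1$, together with $-K_{X'}\cdot f'_*T_j\ge 1$. This comes from \cite[Lemma 4.3]{JLR25b}, because each tree meets the rest of $C'$ only along a spine not contained in the exceptional locus. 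It does not follow from terminality alone. The inequality you cite, $-K_X\cdot\pi_*(\,\cdot\,)\ge-K_{X'}\cdot(\,\cdot\,)$, is the one used for the spine and points the wrong way for the tails.

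Second, you cannot move the tail into the smooth locus by ``deforming away from the codimension-$\ge 3$ singular locus''. On $X'$ the preimage of $\mathrm{Sing}(X)$ is typically divisorial, and a free curve cannot be moved off a divisor it meets. On $X$, deformations through singular points are not governed by $f^*T_X$. The paper instead shows that $f'|_C$ is a general member of its component, so $(p_0',f'(q_0))$ is a general pair of points. From this it gets that $R_0$ is irreducible and very free, and the equality case \cite[Lemma 4.2]{JLR25b} then makes $f'(R_0)$ disjoint from the exceptional locus.
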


\begin{proof}
    We first prove that $X$ contains a very free rational curve of degree $n+1$ in its smooth locus.  Choose a positive integer $c$ so that $cK_{X}$ is Cartier.  Let $\pi : X' \to X$ be a resolution of singularities that is an isomorphism over the smooth locus.  By \cite[Theorem 1.3]{JLR25b} there is a genus $g \geq 2$ such that $X$ admits a very free curve $C$ of genus $g$ and arbitrarily large anticanonical degree.  In particular, setting $\gamma = (1 + nc)(g + 1)$ we may suppose that the anticanonical degree $d$ of $C$ satisfies
    \begin{equation} \label{eq:degbound}
        d > n(1 + nc)(3g + \gamma). 
    \end{equation}
    Define $s = \lceil \frac{d}{n} \rceil - \gamma$ and $\delta = d-(g-1)n-(s+1)n$.  
    By construction $s \geq 1$ and $\delta \geq 1$.   
    
    Let $q_{0},q_1, \dots, q_{s}, r_1, \dots, r_{\delta-1}$ be general points of $C$. Let $p_0, p_{1}, \dots, p_{s}$ be general points of $X$ and let $D_1, \dots, D_{\delta-1}$ be general divisors in a very ample linear series on $X$. By Lemma \ref{lem-manyGeneralPoints} there is a $\delta$-parameter family of maps $f: C \to X$ with $f(q_i) = p_i$. Thus, we can find a 1-parameter subfamily $M$ of maps $f: C \to X$ with $f(q_i) = p_i$ and $f(r_i) \in D_i$.  We take the strict transforms of these maps and apply Bend-and-Break on $X'$ to obtain a limiting stable map $f': C' \to X'$ where $C'$ has a rational tail attached to $C$ at $q_0$.  Since the image of $q_0$ does not lie in the $\pi$-exceptional locus, we see that the rational tail is not contracted by $\pi \circ f'$.  For the rest of the proof, we consider $C$ as the unique irreducible component of $C'$ with positive genus and consider $q_i, r_j \in C \subset C'$ as points of $C'$.

    We study the map $f': C' \to X'$. Let $R_i$ be the fiber of the stabilization map $C' \to C$ over $q_i$ and let $T_j$ be the fiber of the stabilization map $C' \to C$ over $r_j$. Let $I \subset \{ 1, \dots,  s \}$  be the set of integers $i$ for which $R_i$ is a curve and let $J \subset \{ 1, \dots, \delta-1 \}$ be the set of integers for which $T_j$ is a curve. Set $h = |I|$.
    
    We claim that $s - h \geq 3g$. If not, then $h > s-3g$ and the total $-K_X$-degree of $\pi_*f'_* C'$ must be at least
    \begin{align*}
        (h+1)\left(n+\frac{1}{c}\right) & > (s-3g)\left(n+\frac{1}{c} \right) = sn + \frac{s}{c} - 3g \left(n+\frac{1}{c} \right) \\ & \geq d - \gamma n + \frac{s}{c} -3g\left(n+\frac{1}{c} \right) \geq d +\frac{d}{nc} - (3g + \gamma)\left(n + \frac{1}{c}\right).
    \end{align*}
    By the inequality \eqref{eq:degbound} the right hand side is larger than $d$ and we obtain a contradiction. Thus, $s - h \geq 3g$ and we see that the spine curve  $C \subset C'$ is not contracted by $\pi \circ f'$.  
    
    Note that the deformations of $f'|_C: C \to X'$ which satisfy $f(q_{i}) = p_{i}$ for all but one value of $i \in \{1,\ldots,s\} \backslash I$ define a dominant family of curves.  Thus the twist down of $f'^{*}T_{X'}$ by the sum of the corresponding points $\{ q_{i} \}$ is generically globally generated. Since $s-h-1 \geq 3g-1 \geq 2g+1$ we conclude that every positive rank quotient of $f'^{*}T_{X'}$ has slope at least $2g+1$.  Thus $f'|_C$ is very free and deforms with the expected dimension.

    We now carefully calculate $-K_{X'}$ degrees. Because $f'|_C$ sends $s-h$ chosen points of $C$ to $s-h$ general points in $X'$ and sends $\delta-1 - |J|$ chosen points of $C$ to general divisors in $X$, we see that $h^0(f'^*T_{X'}|_C)$ is at least $(s-h)n+\delta-1-|J|$. Since $f'|_C$ is very free, we know that $h^1(f'^*T_{X'}|_C) = 0$. Hence, by Riemann-Roch the $(-K_{X'})$-degree of $f'_* C$ must be at least $(s-h)n+\delta-1 - |J| +(g-1)n$. 
    
    We next apply \cite[Lemma 4.3]{JLR25b} with $S = R_i$. Since $S$ only meets the rest of $C'$ along the spine curve, which is not contained in the $\pi$-exceptional locus, \cite[Lemma 4.3]{JLR25b} shows that $-K_{X'} \cdot f'_{*}R_i \geq -\pi^*K_X \cdot f'_{*}R_i  > n$. Since it must be an integer, $-K_{X'} \cdot f'_{*}R_i \geq n+1$.   Similarly, Lemma \cite[Lemma 4.3]{JLR25b} shows $-K_{X'} \cdot f'_{*} T_j \geq 1$.  Thus, by \cite[Lemma 4.3]{JLR25b} again the total $-K_{X'}$ degree of $f'_* C'$ is at least 
    \begin{align*}
        -K_{X'} \cdot f'_{*} C + \sum_{i \in I \cup \{ 0 \}} -K_{X'} \cdot f'_{*} R_i + \sum_{j \in J} -K_{X'} \cdot f'_{*} T_j \geq -K_{X'} \cdot f'_{*} C + (h+1)(n+1) + |J| &\\
         \geq (s-h)n+\delta-  1 - |J| + (g-1)n + (h+1)(n+1) +|J| & \\
         = (s+1)n + (g-1)n + \delta +h &\\
         = d + h. &
    \end{align*}
    It follows that $h =0$ and every inequality above is an equality.  Thus, $-K_{X'} \cdot f'_{*}R_0 = n+1$ and $-K_{X'} \cdot f'_{*}C = sn + (g-1)n +\delta-1 - |J|$.  
    
    The map $f' : C' \to X'$ we obtain depends on the choice of each $q_i, p_i, r_j$, and $D_j$, but for general choices the stable map $f'$ will lie in a fixed stratum of the Kontsevich space.  Since $f'|_C$ is very free, the space of deformations of $f'|_C$ has dimension $sn+\delta-1 -|J|$.  Thus there are only finitely many possibilities for the map $f'|_C$ which satisfy the incidence conditions $\pi \circ f'(q_i) = p_i$ for $1 \leq i \leq s$ and $\pi \circ f'(r_j) \in D_j$ for $j \in \{1, \ldots \delta - 1\}\setminus J$.  This means that for general choices of the points $\{ p_{i} \}$ and divisors $\{D_{j}\}$, the map $f'|_{C}$ will be a general element of the component $M \subset \mathrm{Mor}(C,X')$ it lies in.  Moreover, the aforementioned incidence conditions do not depend on the choice of $q_0 \in C$.  As $M$ parameterizes a dominant family of maps and $f'|_C \in M$ is general, the point $f'|_{C}(q_{0}) \in X'$ will be a general point.  In fact, setting $p_0' = \pi^{-1}(p_0)$, the pair of points $(p_0', f'|_{C}(q_{0})) \in X' \times X'$ is general as well, once more because the incidence conditions above are independent of the choice of $p_0$.

    Since $R_0$ attaches to $C$ at $q_0$, $f'(R_0)$ passes through the general pair of points $p_{0}'$ and $f'|_{C}(q_{0})$ of $X'$.  
    Note that $\pi \circ f'(R_0)$ must be irreducible, because otherwise it would have two components both of $(-K_X)$-degree greater than $n$, but 
    $$n + 1 = -K_{X'} \cdot f'_* R_0 \geq -K_X \cdot \pi_*f'_* R_0$$
    by \cite[Lemma 4.3]{JLR25b}.  It follows that $R_0$ must contain an irreducible component $F_0$ such that $f'|_{F_{0}}$ passes through both $p_{0}'$ and $f'|_{C}(q_{0})$; generality of these points implies that $f'|_{F_{0}}$ is a very free rational curve on $X'$ by \cite[4.10]{Debarre01}. 
In particular $-K_{X'} \cdot f'_{*}F_{0} \geq n+1$.  If $R_0$ had more than one irreducible component, we could apply \cite[Lemma 4.3]{JLR25b} with $S$ equal to a connected component of the complement of $F_0$ in $R_0$ to see that $-K_{X'} \cdot f'_*R_0$ is strictly larger than $n+1$,  a contradiction. 
    We conclude that $R_0$ is irreducible. By \cite[Lemma 4.2]{JLR25b}, it follows that $f'(R_0)$ is disjoint from the $\pi$-exceptional locus.

    Thus we have established the existence of a very free rational curve of degree $n+1$ (which by definition means that the image is contained in the smooth locus of $X$). In particular, the irreducible component of $\mathrm{Mor}(\mathbb{P}^{1},X)$ containing such curves is doubly dominant (in the sense of \cite{CMSB}).  Furthermore, the degree assumption implies that the curves on $X$ cannot break as we deform them.  Thus this irreducible component of $\mathrm{Mor}(\mathbb{P}^{1},X)$ is also closed, irreducible, maximal, and everywhere unsplitting.  By \cite[Theorem 4.2]{CMSB} we conclude that $X$ is isomorphic to $\mathbb{P}^{n}$.
\end{proof}

\subsection{General case}

We are now prepared to prove Theorem \ref{thm:maintheorem}:

\begin{proof}[Proof of Theorem~\ref{thm:maintheorem}:]
Let $\varphi_R \colon X \to W$ be the extremal contraction associated to $R$
(see, e.g., \cite[Theorem~1.1]{fujino2}).  
By Corollary~\ref{a-cor1.4} together with the assumption $l(R)>\dim X$,
the target $W$ is a point.  
Consequently, $\rho(X)=1$, the divisor $-(K_X+\Delta)$ is ample, and 
the non-lc locus $\Nlc(X,\Delta)$ satisfies 
$\dim \Nlc(X,\Delta)\le 0$.

Suppose that $(X,\Delta)$ is not kawamata log terminal.  
Assume that there exists a positive-dimensional log canonical center $V$ of $(X,\Delta)$.  
By adjunction (see, e.g., \cite[Theorem 6.3.5 (i)]{fujino3}), 
the pair $[V,\omega]$, where $\omega := (K_X+\Delta)|_V$, is a quasi-log scheme, and 
$-\omega$ is ample.  
By Theorem~\ref{a-thm1.2}, we obtain a rational curve $\ell \subset V$ such that
\[
0 < -\omega \cdot \ell \leq \dim V + 1 .
\]
This yields a rational curve $\ell \subset X$ satisfying
\[
0 < -(K_X+\Delta)\cdot\ell \leq \dim X ,
\]
which contradicts the assumption $l(R)> \dim X$.  
Therefore $(X,\Delta)$ has no positive-dimensional log canonical centers, and
the non-klt locus $\Nklt(X,\Delta)$ is zero-dimensional since 
$\dim \Nlc(X, \Delta)\leq 0$. By \cite[Theorem~1.17]{fujino6} (see also 
\cite[Theorem~1.7]{fujino-hashizume}), 
there exists a rational curve $\ell \subset X$ such that
\[
0 < -(K_X+\Delta)\cdot \ell \le 1 ,
\]
which again contradicts $l(R)>\dim X$.  
Hence $(X,\Delta)$ is kawamata log terminal.

By \cite[Discussion after Corollary 1.4.3]{BCHM10}, there exists a projective birational morphism
$f \colon \widetilde X \to X$ such that $(\widetilde X, \widetilde\Delta)$ is a 
$\mathbb Q$-factorial terminal pair and
\[
K_{\widetilde X} + \widetilde\Delta = f^*(K_X+\Delta).
\]
Note that $-(K_{\widetilde X}+\widetilde\Delta)$ is nef and big.  
Thus we may take a $(K_{\widetilde X}+\widetilde\Delta)$-negative extremal ray 
$\widetilde R \subset \NE(\widetilde X)$ with $f_*\widetilde R = R$.  
Because $f$ is log crepant $l(\widetilde R)> \dim \widetilde X $. 

Let $\varphi_{\widetilde R} \colon \widetilde X \to \widetilde W$ be the associated contraction.  
By Corollary~\ref{a-cor1.4} again, $\widetilde W$ is a point, and hence $\rho(\widetilde X)=1$.  
Therefore $f\colon \widetilde X \to X$ is an isomorphism and $X$ is a $\mathbb Q$-factorial terminal Fano variety.  Furthermore $\Delta$ is nef so that $-K_{X} \cdot C > n$ for every rational curve $C$ on $X$.  By Theorem \ref{thm:vfcurveexists} $X \simeq \mathbb{P}^{n}$. All the remaining desired properties can be verified easily.
\end{proof}

To deduce Corollary \ref{cor:maincor1} and Corollary \ref{coro:maincor2} from Theorem \ref{thm:maintheorem}, we will require the following important remark concerning the value $l(R)$ 
in Theorem~\ref{thm:maintheorem}. 

\begin{rem}\label{b-rem3.3}
Let $(X, \Delta)$ be a projective log canonical pair and let 
$R$ be a $(K_X+\Delta)$-negative extremal ray of 
$\NE(X)$. 
It is well known that 
\begin{equation}\label{b-eq3.1}
l(R)=\min\bigl\{-(K_X+\Delta)\cdot C \, | \, C \textrm{ is rational and }[C]\in R\bigr\},
\end{equation}
that is, there exists a rational curve $C$ on $X$ with $[C]\in R$ such that 
\[
l(R)=-(K_X+\Delta)\cdot C.
\]
If $K_X+\Delta$ is $\mathbb Q$-Cartier, then the equality 
\eqref{b-eq3.1} is immediate. 
When $K_X+\Delta$ is only $\mathbb R$-Cartier, the same conclusion 
follows by adapting the argument in the proof of 
\cite[Theorem~4.7.2\,(1)]{fujino3}. 
\end{rem}

\begin{proof}[Proof of Corollary \ref{cor:maincor1}:]
    Since $(X,\Delta)$ is log canonical, $R$ is relatively ample at infinity and \cite[Theorem~1.1]{fujino2} shows that $R$ is rational.  The statement follows from Theorem \ref{thm:maintheorem} and Remark~\ref{b-rem3.3}.
\end{proof}

\begin{proof}[Proof of Corollary~\ref{coro:maincor2}]
This follows from Corollary \ref{cor:maincor1} and the cone theorem for log canonical pairs.
\end{proof}

Finally, we extend Corollary \ref{cor:maincor1} 
to semi-log canonical pairs.

\begin{cor}\label{cor-slc2}
Let $(X,\Delta)$ be a connected $n$-dimensional projective semi-log canonical pair such that $\NE(X)$ admits a $(K_X+\Delta)$-negative extremal ray $R$ of length greater than $n$.
Then $X\simeq \mathbb P^n$, $(X,\Delta)$ is terminal, and $\Delta$ is a divisor of degree less than $1$.
\end{cor}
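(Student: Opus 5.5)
The plan is to reduce to Theorem~\ref{thm:maintheorem} by passing to the normalization, using the quasi-log structure of slc pairs to control the non-normal locus. Let $\nu\colon X^\nu\to X$ be the normalization, and write $K_{X^\nu}+\Theta=\nu^*(K_X+\Delta)$. Here $\Theta$ is the sum of the pullback of $\Delta$ and the conductor divisor, so $(X^\nu,\Theta)$ is log canonical, possibly disconnected. First I would recall that $[X,K_X+\Delta]$ is a quasi-log canonical pair, by Ambro and Fujino's results on slc pairs (see \cite[Chapter~6]{fujino3} and \cite{fujino5}). This gives the cone and contraction theorem for $\NE(X)$. In particular $R$ is rational and relatively ample at infinity, there is a contraction $\varphi_R\colon X\to W$, and $l(R)$ is attained by a rational curve, as in Remark~\ref{b-rem3.3}.

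Next I would apply Theorem~\ref{a-thm1.3} to the quasi-log scheme $[X,K_X+\Delta]$ with $\omega=K_X+\Delta$. It gives a rational curve $\ell$ spanning $R$ with $0<-(K_X+\Delta)\cdot\ell\le d+1$, where $d$ is the minimal dimension of a positive-dimensional fiber component of $\varphi_R$. Since $l(R)>n$, we get $d\ge n$. So some fiber component is all of $X$ (using connectedness and equidimensionality of slc schemes), $W$ is a point, $\rho(X)=1$, and $-(K_X+\Delta)$ is ample.

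The key step is to rule out non-normality, which I expect to be the main obstacle. Suppose the double locus $D\subset X$ is nonempty. Then $D$ is a union of slc strata, i.e.\ qlc strata of $[X,K_X+\Delta]$, of dimension $n-1$. By adjunction for quasi-log schemes (\cite[Theorem~6.3.5~(i)]{fujino3}), $[D,(K_X+\Delta)|_D]$ is a quasi-log canonical scheme with $-(K_X+\Delta)|_D$ ample. Theorem~\ref{a-thm1.2}, applied with $W$ a point and $E$ a component of $D$, then produces a rational curve $\ell\subset D$ with $0<-(K_X+\Delta)\cdot\ell\le \dim D+1=n$. Because $\rho(X)=1$, $[\ell]\in R$, and this contradicts $l(R)>n$. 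The subtle point is checking that the adjunction really applies to $D$ in the slc setting and that the qlc strata include the non-normal locus; both are standard but should be cited carefully. The same argument applied to any positive-dimensional qlc stratum shows there are none, so $X$ is normal.

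Once $X$ is normal, $(X,\Delta)$ is an lc pair and $R$ is a $(K_X+\Delta)$-negative extremal ray of length $>n$. Corollary~\ref{cor:maincor1} then gives $X\simeq\mathbb P^n$, that $(X,\Delta)$ is terminal, and that $\deg\Delta<1$.
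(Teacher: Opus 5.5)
There is a genuine gap where you conclude that $W$ is a point. With $l(R)>n$, Theorem~\ref{a-thm1.3} (or Theorem~\ref{a-thm1.2}) only tells you that every positive-dimensional irreducible component of a fiber of $\varphi_R$ has dimension $n$. That is, such a component is an irreducible component $X_{i_0}$ of $X$. Connectedness and equidimensionality do not then force all of $X$ to be contracted. If another component $X_j$ meets $X_{i_0}$ along an $(n-1)$-dimensional set, that set lies inside the $n$-dimensional fiber component $X_{i_0}$, so the bound $d\ge n$ is not violated.

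Ruling this out is exactly the non-normality problem. But your double-locus argument already assumes $W$ is a point: you use it to get $-(K_X+\Delta)|_D$ ample and to get $[\ell]\in R$ from $\rho(X)=1$. So the reasoning is circular. Separately, for $n=1$ the double locus is zero-dimensional and Theorem~\ref{a-thm1.2} says nothing about it, so your normality argument does not cover curves.

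The paper avoids both problems by working on the normalization $Z_{i_0}$ of the contracted component. There $-(K_{Z_{i_0}}+\Delta_{Z_{i_0}})$ is ample, and every rational curve on $Z_{i_0}$ maps to a curve with class in $R$. So Corollary~\ref{coro:maincor2} gives $Z_{i_0}\simeq\mathbb P^n$ with $\lfloor\Delta_{Z_{i_0}}\rfloor=0$. Since the conductor appears with coefficient one in $\Delta_Z$, it does not meet $Z_{i_0}$. Hence $\nu(Z_{i_0})$ is a connected component of $X$, so it is all of $X$ and $\nu$ is an isomorphism; Corollary~\ref{cor:maincor1} then finishes. This handles every $n$ uniformly.

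Your route can also be repaired:
\begin{enumerate}
\item Apply Theorem~\ref{a-thm1.2} to $[D,(K_X+\Delta)|_D]$ relative to $\varphi_R|_D\colon D\to W$. Only relative ampleness is needed, and curves contracted by $\varphi_R$ lie in $R$. This shows $D\to W$ is finite, so $D\cap X_{i_0}$ is finite.
\item The conductor of an $S_2$ scheme has pure codimension one, and any point where $X_{i_0}$ meets another component lies in $D$. For $n\ge 2$ this forces $D\cap X_{i_0}=\emptyset$, so $X=X_{i_0}$ and $X$ is normal.
\item The case $n=1$ still needs a direct degree count: $-(K_X+\Delta)\cdot X_{i_0}=2-\deg\Delta_{Z_{i_0}}>1$ rules out conductor points on $Z_{i_0}$.
\end{enumerate}
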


\begin{proof}
By the cone and contraction theorem for semi-log canonical pairs, there exists a contraction morphism
\[
\varphi_R \colon X \to W
\]
associated with the extremal ray $R$.
Let $\nu \colon Z \to X$ be the normalization and write
\[
K_Z + \Delta_Z := \nu^*(K_X+\Delta).
\]
Let
\[
Z = \bigsqcup_{i\in I} Z_i
\]
be the decomposition into irreducible components and set $\Delta_{Z_i} := \Delta_Z|_{Z_i}$ for each $i\in I$.
By Theorem~\ref{a-thm1.2}, there exists an index $i_0 \in I$ such that
\[
(\varphi_R \circ \nu)(Z_{i_0})
\]
is a point.
Applying Corollary~\ref{coro:maincor2} to the pair $(Z_{i_0}, \Delta_{Z_{i_0}})$, we conclude that
\[
Z_{i_0} \simeq \mathbb P^n
\quad\text{and}\quad
\lfloor \Delta_{Z_{i_0}} \rfloor = 0.
\]
This forces $Z$ to be irreducible, hence $Z = Z_{i_0}$, and the normalization map $\nu \colon Z \to X$ is an isomorphism.
The assertion now follows from Corollary~\ref{cor:maincor1}.
\end{proof}

\bibliographystyle{alpha}
\bibliography{main}

\end{document}